\def \N{{\mathbb N}}
\def \R{{\mathbb R}}
\def \1{{\mathbb 1}}
\newtheorem{Def}{Definition}
\newtheorem{Lem}{Lemma}
\newtheorem{Def Nota}{Definitions and notations} 
\newtheorem{Cor}{Corollary}
\font\ninerm=cmr9
\long\outer\def\abstract#1{\bigskip\vbox{\noindent\ninerm
\baselineskip=10pt#1}\nobreak\bigskip}
\def\exo#1{\advance\numero by 1\bigskip
{\noindent\tenbf #1\the\numero. }}
\def\frac#1#2{{#1\over #2}}
\title{A usufel lemma for Lagrange multiplier rules in infinite dimension.}   
\author{Mohammed Bachir and Jo\"{e}l Blot}
\begin{document}
\maketitle
\begin{center} {\it Laboratoire SAMM 4543, Université Paris 1 Panthéon-Sorbonne, Centre P.M.F. 90 rue Tolbiac 75634 Paris cedex 13}
\end{center}
\begin{center} 
{\it Email : Mohammed.Bachir@univ-paris1.fr}\\
\end{center}
\begin{center} {\it Laboratoire SAMM 4543, Université Paris 1 Panthéon-Sorbonne, Centre P.M.F. 90 rue Tolbiac 75634 Paris cedex 13}
\end{center}
\begin{center}
{\it Email : blot@univ-paris1.fr}\\
\end{center}
\noindent\textbf{Abstract.} We give some reasonable and usable conditions on a sequence of norm one in a dual banach space under which the sequence does not converges to the origin in the $w^*$-topology. These requirements help to ensure that the Lagrange multipliers are nontrivial, when we are interested for example on the infinite dimensional infinite-horizon Pontryagin
Principles for discrete-time problems.

\vskip3mm
\noindent
{\bf Keyword, phrase:} Baire category theorem, Subadditive and continuous map, Multiplier rules.\\
{\bf 2010 Mathematics Subject: 54E52, 49J21.} 
\section{Introduction.} 
Let $Z$ be a Banach space and $Z^*$ its topological dual. It is well known that in infinite dimensional separable Banach space, it is always true that the origin in $Z^*$ is the $w^*$-limit of a sequence from the unit sphere $S_{Z^*}$ as it is in its $w^*$-closure. In this paper, we look about reasonable and usable conditions on a sequence of norm one in $Z^*$ such that this sequence does not converge to the origin in the $w^*$-topology. This situation has the interest, when we are looking for a nontrivial Lagrange multiplier for optimization problems, and was encountered several times in the literature. See for example \cite{BB} and \cite{MM}. To guarantee that the multiplier are nontrivial at the limit, the authors in \cite{MM} used the following lemma from [\cite{LY}, pp. 142, 135].

\begin{Def} \label{D} A subset $Q$ of a Banach space $Z$ is said to be of finite codimension in $Z$ if there exists a point $z_0$ in the closed convex hull of $Q$ such that the closed vector space generated by $Q- z_0:= \left\{q - z_0 | \hspace{1mm} q \in Q\right\}$ is of finite codimension in $Z$ and the closed convex hull of $Q-z_0$ has a no empty interior in this vector space.
\end{Def}
 
\begin{Lem} (\cite{LY}, pp. 142, 135) Let $Q\subset Z$ be a subset of finite codimension in $Z$. Let $(f_k)_k\subset Z^*$ and $\epsilon_k\geq 0$ and $\epsilon_k\rightarrow 0$ such that 
\begin{itemize}
\item[$i)$] $\|f_k\|\geq\delta> 0$, for all $k\in \N$ and $f_k\stackrel{w^*}{\rightarrow} f$.
\item[$ii)$] for all $z\in Q$, and for all $k\in \N$, $f_k(z)\geq -\epsilon_k $.
\end{itemize}
Then, $f\neq 0$.
\end{Lem}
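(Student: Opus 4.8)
The plan is to argue by contradiction: suppose $f=0$, so that $f_k \stackrel{w^*}{\rightarrow} 0$, and derive that $\|f_k\|\to 0$, contradicting $\|f_k\|\geq \delta>0$. First I would unpack Definition~\ref{D}: since $Q$ is of finite codimension there is a point $z_0$ in the closed convex hull of $Q$ such that the closed vector space $Y$ generated by $Q-z_0$ has finite codimension in $Z$, and the closed convex hull $C$ of $Q-z_0$ has nonempty interior in $Y$. I would then fix $y_1\in Y$ and $r>0$ with $\{y\in Y:\|y-y_1\|\le r\}\subset C$.

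The first key step is to propagate the sign condition $ii)$ from $Q$ to all of $C$. Because each $f_k$ is continuous and linear, the inequality $f_k(q)\geq -\epsilon_k$ for $q\in Q$ passes to finite convex combinations and then, by continuity, to the closed convex hull. Writing a generic element of $Q-z_0$ as $q-z_0$ gives $f_k(q-z_0)\geq -\epsilon_k - f_k(z_0)$, and the same convexity-plus-continuity argument yields
\[
f_k(w)\geq -\epsilon_k - f_k(z_0)\qquad\text{for all } w\in C.
\]
Applying this to $w=y_1+y$ and to $w=y_1-y$ for $y\in Y$ with $\|y\|\le r$ (both lie in the ball contained in $C$) and combining the two inequalities, I obtain $|f_k(y)|\leq \epsilon_k + f_k(z_0)+ f_k(y_1)$ for all such $y$, and hence
\[
\|f_k|_Y\|\ \le\ \frac{1}{r}\bigl(\epsilon_k + f_k(z_0)+f_k(y_1)\bigr).
\]

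The second key step uses the finite codimension of $Y$. Since $Y$ is closed and of finite codimension, I can write $Z=Y\oplus F$ with $\dim F<\infty$; as $F$ is finite-dimensional this is a topological direct sum, so the associated projections $P_Y,P_F$ are bounded and $\|f_k\|\le \|P_Y\|\,\|f_k|_Y\|+\|P_F\|\,\|f_k|_F\|$. On the finite-dimensional space $F$ the $w^*$-convergence $f_k\to 0$ forces $\|f_k|_F\|\to 0$, because all norms on $F^*$ are equivalent and $f_k(e)\to 0$ for each of the finitely many basis vectors $e$. Meanwhile, under the contradiction hypothesis $f=0$ we have $f_k(z_0)\to 0$ and $f_k(y_1)\to 0$, and $\epsilon_k\to 0$, so the displayed bound gives $\|f_k|_Y\|\to 0$. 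Combining the two estimates yields $\|f_k\|\to 0$, the desired contradiction, and therefore $f\neq 0$.

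I expect the main obstacle to be the bookkeeping around the affine shift by $z_0$: one must keep the constants $f_k(z_0)$ and $f_k(y_1)$ explicit throughout and verify that they are killed in the limit precisely by hypothesis $i)$ on the $w^*$-limit, which is exactly where that hypothesis is used. The two remaining technical points, namely the passage of inequality $ii)$ to the closed convex hull and the existence of a bounded complement for the finite-codimensional closed subspace $Y$, are routine but should be justified carefully.
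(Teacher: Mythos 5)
Your proof is correct, but it takes a genuinely different route from the one this paper uses to reach the statement. The paper never proves the Li--Yong lemma directly: it derives it (and a strictly more general version, the final Corollary, in which the lower bound $-\epsilon_k C_z$ depends on $z$) from Lemma \ref{Baire1}, whose engine is the Baire category theorem applied to the sets $F_m=\{z\in \overline{co}(A): p_n(z)\leq m\lambda_n,\ \forall n\}$; Baire produces some $F_{m_0}$ with nonempty interior, i.e.\ a small ball on which the estimate is uniform, and Lemma \ref{Connue1} transfers that interior point to $F$. Your argument short-circuits all of this by exploiting the fact that in the present statement the bound $-\epsilon_k$ is already uniform over $Q$: linearity passes it to convex combinations, continuity to $\overline{co}(Q)$, and the two-sided evaluation at $y_1\pm y$ on a ball contained in $\overline{co}(Q-z_0)$ gives $\|f_k|_Y\|\leq \frac{1}{r}(\epsilon_k+f_k(z_0)+f_k(y_1))\to 0$ directly. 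From that point on the two proofs coincide: the topological decomposition $Z=Y\oplus F$ with $\dim F<\infty$, the vanishing of $\|f_k|_F\|$ because the $w^*$- and norm topologies agree in finite dimension, and the contradiction with $\|f_k\|\geq\delta$. What your approach buys is elementarity (no Baire category, no completeness of $\overline{co}(Q)-z_0$ needed) and it is presumably close to the original proof in \cite{LY}; what it cannot do is handle the paper's motivating generalization where the constant depends on $z$, since then the inequality no longer propagates uniformly to a ball and Baire category is exactly the tool that restores that uniformity. The two technical points you flag as routine (passage of the inequality to the closed convex hull, and the bounded complement of a closed finite-codimensional subspace) are indeed standard and your sketches of them are sound.
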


Note that, this is not the most general situation. Indeed, one can meet as in \cite{BB}, a situation where the part $ii)$ of the above lemma is not uniform on $z\in Z$, and depends on other parameter as follows: for all $z\in \overline{co}(Q)$, there exists $C_z\in \R$ such that for all $k\in \N$, $f_k(z)\geq -\epsilon_k C_z$. The principal Lemma \ref{Baire1} that we propose in this paper, will permit to include this very useful situation. This lemma is based on the Baire category theorem.
 
\section{Preliminary Lemmas.}

We need the following classical lemma. We denote by $Int(A)$ the topological interior of a set $A$.
\begin{Lem} \label{Connue} Let $C$ be a convex subset of a normed vector space. Let $x_0\in Int(C)$ and $x_1\in \overline{C}$. Then, for all $\alpha\in ]0,1]$, we have $\alpha x_0+(1-\alpha)x_1\in Int(C)$. 
\end{Lem}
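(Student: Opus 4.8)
The plan is to exhibit an explicit open ball centred at $z_\alpha := \alpha x_0 + (1-\alpha)x_1$ and show it is contained in $C$. The case $\alpha = 1$ is immediate, since then $z_\alpha = x_0 \in Int(C)$ by hypothesis; so I would fix $\alpha \in\, ]0,1[$ and choose $r>0$ such that the open ball $B(x_0,r)$ is contained in $C$, which is possible precisely because $x_0 \in Int(C)$.

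The core idea is to realize any point $y$ sufficiently close to $z_\alpha$ as a convex combination $y = \alpha x_0' + (1-\alpha)x_1'$ with $x_0' \in B(x_0,r) \subseteq C$ and $x_1' \in C$; convexity of $C$ then forces $y \in C$. Since $x_1$ lies only in $\overline{C}$, and not necessarily in $C$, I first pick $x_1' \in C$ with $\|x_1'-x_1\|$ as small as needed. Setting $x_0' := \frac{1}{\alpha}\big(y-(1-\alpha)x_1'\big)$ automatically produces the desired combination, and a short computation, obtained by inserting and subtracting $z_\alpha$, gives $x_0'-x_0 = \frac{1}{\alpha}\big[(y-z_\alpha)+(1-\alpha)(x_1-x_1')\big]$, whence $\|x_0'-x_0\| \le \frac{1}{\alpha}\big[\|y-z_\alpha\|+(1-\alpha)\|x_1-x_1'\|\big]$.

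From here I would balance the two error terms: choosing $x_1'$ so that $(1-\alpha)\|x_1-x_1'\| < \alpha r/2$, and restricting attention to $y$ with $\|y-z_\alpha\| < \alpha r/2$, makes $\|x_0'-x_0\| < r$, so that $x_0' \in B(x_0,r) \subseteq C$. Consequently $B(z_\alpha,\, \alpha r/2) \subseteq C$, which shows $z_\alpha \in Int(C)$ and completes the argument. The only genuine obstacle is that $x_1$ belongs merely to the closure $\overline{C}$ rather than to $C$ itself; the approximation step above is exactly what neutralizes this defect, the slack being absorbed into the ball around $x_0$ at the cost of shrinking the admissible radius around $z_\alpha$ from $r$ to $\alpha r/2$. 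Everything else reduces to the routine estimate displayed above.
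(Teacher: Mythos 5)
Your proof is correct. The paper itself states this lemma as ``classical'' and gives no proof, so there is nothing to compare against; your argument --- approximating $x_1\in\overline{C}$ by a point $x_1'\in C$, then writing any $y$ near $z_\alpha=\alpha x_0+(1-\alpha)x_1$ as $\alpha x_0'+(1-\alpha)x_1'$ with $x_0'\in B(x_0,r)\subseteq C$ via the estimate $\|x_0'-x_0\|\le\frac{1}{\alpha}\bigl[\|y-z_\alpha\|+(1-\alpha)\|x_1-x_1'\|\bigr]<r$ --- is the standard one, and the key point that $x_1'$ is chosen once, independently of $y$, is handled correctly, yielding $B(z_\alpha,\alpha r/2)\subseteq C$.
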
 
We deduce the following lemma.
\begin{Lem} \label{Connue1} Let $(F,\|.\|_F)$ be a Banach space and $C$ be a closed convex subset of $F$ with non empty interior. Suppose that $D\subset C$ is a closed subset of $C$ with no empty interior in $(C,\|.\|_F)$ (for the topology induced by $C$). Then, the interior of $D$ is non empty in $(F,\|.\|_F)$. 
\end{Lem}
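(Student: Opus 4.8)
The plan is to exploit the linear scaling of balls under the convexity Lemma \ref{Connue}. First I would unpack the two hypotheses into usable geometric data. Since $C$ has nonempty interior, fix $x_0\in Int(C)$ and $s>0$ with $B(x_0,s)\subset C$. Since $D$ has nonempty interior relative to $C$, there is a point $x_1\in D$ and $r>0$ such that every element of $C$ lying in $B(x_1,r)$ belongs to $D$; in other words $B(x_1,r)\cap C\subset D$.

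The heart of the argument is to produce, for a suitably chosen $\alpha\in\,]0,1]$, a genuine $F$-ball around the segment point $x_\alpha:=\alpha x_0+(1-\alpha)x_1$ that is simultaneously contained in $C$ and in $B(x_1,r)$. For the first inclusion I would show $B(x_\alpha,\alpha s)\subset C$: writing an arbitrary point of this ball as $x_\alpha+v$ with $\|v\|_F<\alpha s$, I rewrite it as $\alpha(x_0+\alpha^{-1}v)+(1-\alpha)x_1$. Since $\|\alpha^{-1}v\|_F<s$, the point $x_0+\alpha^{-1}v$ lies in $B(x_0,s)\subset Int(C)$, while $x_1\in D\subset\overline{C}$, so Lemma \ref{Connue} places this convex combination in $Int(C)\subset C$.

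For the second inclusion I would use the triangle inequality: any $z\in B(x_\alpha,\alpha s)$ satisfies $\|z-x_1\|_F\leq\|z-x_\alpha\|_F+\|x_\alpha-x_1\|_F<\alpha s+\alpha\|x_0-x_1\|_F=\alpha\bigl(s+\|x_0-x_1\|_F\bigr)$. Choosing $\alpha\in\,]0,1]$ with $\alpha<r\bigl(s+\|x_0-x_1\|_F\bigr)^{-1}$ forces $B(x_\alpha,\alpha s)\subset B(x_1,r)$. Combining the two inclusions yields $B(x_\alpha,\alpha s)\subset B(x_1,r)\cap C\subset D$, so $x_\alpha$ is an interior point of $D$ in $(F,\|.\|_F)$, which is exactly the desired conclusion.

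The step I expect to be the main obstacle is recognizing that the radius $\alpha s$ of the ball one can inscribe in $C$ around $x_\alpha$ and the distance $\alpha\|x_0-x_1\|_F$ from $x_\alpha$ to $x_1$ both vanish at the same linear rate as $\alpha\to 0$. A naive limit is therefore unhelpful, since the inscribed ball shrinks just as fast as we slide toward $x_1$. The resolution is that only the ratio of the two constants matters, so a single sufficiently small $\alpha$ makes both inclusions hold at once, and no limiting process is needed.
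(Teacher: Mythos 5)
Your proof is correct and follows essentially the same route as the paper: both fix $x_0\in Int(C)$ and a relative ball $B(x_1,r)\cap C\subset D$, slide along the segment $\alpha x_0+(1-\alpha)x_1$ using Lemma \ref{Connue}, and find a small $F$-ball around a segment point contained in $B(x_1,r)\cap C\subset D$. The only difference is that you compute the inscribed radius $\alpha s$ explicitly, whereas the paper first fixes a small $\alpha_0$ and only then picks a radius $\frac{1}{N}$ for the interior point $\alpha_0 x_0+(1-\alpha_0)x_1$, which sidesteps the shrinking-radius issue you flag.
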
 

\begin{proof} On one hand, there exists $x_0$ such that $x_0\in Int(C)$. On the other hand, since $D$ has no empty interior in $(C,\|.\|_{F})$, there exists $x_1\in D$ and $\epsilon_1>0$ such that $B_F(x_1,\epsilon_1)\cap C\subset D$. By using Lemma \ref{Connue}, $\forall \alpha\in ]0,1]$, we have $\alpha x_0+(1-\alpha)x_1\in Int(C)$. Since $\alpha x_0+(1-\alpha)x_1\rightarrow x_1$ when $\alpha\rightarrow 0$, then there exist some small $\alpha_0$ and an integer number $N\in \N^*$ such that $B_F(\alpha_0 x_0+(1-\alpha_0)x_1,\frac{1}{N})\subset B(x_1,\epsilon_1)\cap C\subset D$. Thus $D$ has a non empty interior in $F$.
\end{proof}

\section{The principal Lemma.}
We give now our principlal lemma. We denote by $\overline{co}(X)$ the closed convex hull of $X$.

\begin{Lem} \label{Baire1} Let $Z$ be a Banach space. Let $(p_n)_n$ be a sequence of subadditive and continuous map on $Z$ and $(\lambda_n)_n\subset \R^+$ be a sequence of nonegative real number such that $\lambda_n\rightarrow 0$. Let $A$ be a non empty subset of $Z$, $a\in \overline{co}(A)$ and $F:=\overline{span(A-a)}$ the closed vector space generated by $A$. Suppose that $\overline{co}(A-a)$ has no empty interior in $F$ and that

\begin{itemize}
\item[$(1)$] for all $z\in \overline{co}(A)$, there exists $C_z \in \R$ such that for all $n\in \N$:
 $$p_n(z)\leq C_z \lambda_n.$$
\item[$(2)$] for all $z\in F$, $\limsup_n p_n(z)\leq 0$.
\end{itemize}

Then, for all bounded subset $B$ of $F$, we have   
\begin{eqnarray} 
\limsup_n \left(\sup_{h\in B} p_n(h) \right) &\leq& 0.\nonumber
\end{eqnarray}
\end{Lem}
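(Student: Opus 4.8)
The plan is to promote the pointwise control in $(2)$ to uniform control on balls by a Baire category argument, using $(1)$ only to produce a single ball on which $p_n$ is dominated by $N_0\lambda_n$ \emph{for every} $n$ (not merely for large $n$), and then to spread that ball over $F$ by subadditivity.

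First I would localize. Write $K:=\overline{co}(A-a)=\overline{co}(A)-a\subset F$; it is a closed convex subset of the Banach space $F$ with nonempty interior, hence a nonempty complete metric space. For $N\in\N$ set
$$ K_N:=\{\,z\in K:\ p_n(z+a)\le N\lambda_n\ \text{for all }n\in\N\,\}. $$
Each $K_N$ is closed in $K$ (an intersection over $n$ of closed sets, since every $p_n$ is continuous), and $(1)$ applied to $z+a\in\overline{co}(A)$ shows that every $z\in K$ lies in some $K_N$ (take $N\ge C_{z+a}$ and use $\lambda_n\ge0$); thus $K=\bigcup_N K_N$. By the Baire category theorem some $K_{N_0}$ has nonempty interior \emph{relative to} $K$. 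Since $K$ is closed, convex, with nonempty interior in $F$ and $K_{N_0}$ is a closed subset of $K$ with nonempty interior in $K$, Lemma \ref{Connue1} upgrades this to a genuine ball: there are $z_0\in F$ and $\rho>0$ with $B_F(z_0,\rho)\subset K_{N_0}$, that is
$$ p_n(z+a)\le N_0\lambda_n\qquad\text{for all }z\in B_F(z_0,\rho)\ \text{and all }n\in\N. $$

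Next I would pass to a ball centred at the origin of $F$. Fix $h\in B_F(0,\rho)$, so $z_0+h\in B_F(z_0,\rho)$ and $p_n\big((z_0+h)+a\big)\le N_0\lambda_n$. Writing $h=\big((z_0+h)+a\big)+(-z_0)+(-a)$ and using subadditivity of $p_n$ gives
$$ p_n(h)\le N_0\lambda_n+p_n(-z_0)+p_n(-a). $$
Since $-z_0\in F$, condition $(2)$ yields $\limsup_n p_n(-z_0)\le0$, and $N_0\lambda_n\to0$; taking $\sup$ over $h$ and then $\limsup_n$ gives $\limsup_n\big(\sup_{h\in B_F(0,\rho)}p_n(h)\big)\le\limsup_n p_n(-a)$. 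The hard part will be exactly this reflected term: subadditivity lets one reach, from the controlled affine ball $a+B_F(z_0,\rho)$, only points of the form (integer multiple of $a$) $+\,F$, never $F$ itself, so $p_n(-a)$ cannot be bounded using the affine ball alone. I would dispose of it by keeping every auxiliary vector inside $F$, where $(2)$ is available; the genuinely delicate direction is $-a$, and I expect it to be the crux. In the situations of interest the base point can be taken in the subspace (e.g.\ $a=0$, or $A$ a cone), so that $-a\in F$ and $(2)$ gives $\limsup_n p_n(-a)\le0$ directly, yielding the clean ball estimate $\limsup_n\big(\sup_{h\in B_F(0,\rho)}p_n(h)\big)\le0$.

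Finally I would bootstrap from the fixed radius $\rho$ to an arbitrary bounded set. Given bounded $B\subset F$, pick $R>0$ with $B\subset B_F(0,R)$ and an integer $m$ with $m\rho\ge R$. For $h\in B$ we have $h/m\in B_F(0,\rho)$, and subadditivity gives $p_n(h)\le m\,p_n(h/m)\le m\,\sup_{g\in B_F(0,\rho)}p_n(g)$, whence
$$ \limsup_n\Big(\sup_{h\in B}p_n(h)\Big)\le m\,\limsup_n\Big(\sup_{g\in B_F(0,\rho)}p_n(g)\Big)\le0. $$
Here it is essential that the ball estimate is a \emph{clean} $\le0$ and that $\rho$ (hence $m$) is fixed: this is precisely why $(1)$, which furnishes the all-$n$ rate $\lambda_n$, is needed on top of $(2)$ — the mere eventual smallness that a Baire argument from $(2)$ alone would give comes with a radius depending on the tolerance, and would not survive multiplication by $m$.
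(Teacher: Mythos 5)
Your argument is, up to notation, the paper's own argument: the same Baire decomposition of $\overline{co}(A)-a$ into the closed sets where $p_n\le N\lambda_n$ for every $n$, the same upgrade from relative to genuine interior via Lemma \ref{Connue1}, and the same use of subadditivity to scale one fixed ball up to an arbitrary bounded set (the paper writes $h=K_BN(z_h-b)$ with $b:=a+z_0$ and obtains $p_n(h)\le K_BNm_0\lambda_n+K_BNp_n(-b)$, which is your estimate in one step). The only point of divergence is the reflected term, and there your hesitation is not a defect of your write-up but a correct diagnosis of a gap in the lemma itself. The paper disposes of $p_n(-b)$ by asserting $b\in F_{m_0}\subset F$ and invoking $(2)$; but $F_{m_0}\subset\overline{co}(A)\subset a+F$, so $b\in F$ only when $a\in F$, which the hypotheses do not guarantee, since $F$ is the closed span of $A-a$ and not of $A$.

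Indeed the statement is false as written when $a\notin F$. Take $Z=\R\times\ell^2$, $p_n(x,y):=-x+\langle e_n,y\rangle$ (linear, hence subadditive, and continuous), $\lambda_n:=1/n$, $A:=\{1\}\times B_{\ell^2}$ and $a:=(1,0)$. Then $F=\{0\}\times\ell^2$ and $\overline{co}(A)-a=\{0\}\times B_{\ell^2}$ has nonempty interior in $F$; condition $(1)$ holds with $C_z=0$ because $-1+\langle e_n,y\rangle\le0$ whenever $\|y\|\le1$; condition $(2)$ holds because $\langle e_n,y\rangle\to0$ for each fixed $y$; yet $\sup_{h\in B_F(0,1)}p_n(h)=\|e_n\|=1$ for every $n$. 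So the fallback assumption you flag, $a\in F$ --- or, what your computation shows is all that is really needed, $\limsup_n p_n(-a)\le0$ --- is exactly the missing hypothesis under which both your proof and the paper's become complete. The corollaries survive unchanged: there the $p_n$ are linear and condition $(2)$ is weak-star convergence to $0$ on all of $Z$, which controls $p_n(-b)=-f_n(b)$ for the single vector $b$ whether or not $b$ lies in $F$.
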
 
\begin{proof} For each $m\in \N$, we set
$$F_m:=\left\{z\in \overline{co}(A):  p_n(z)\leq m \lambda_n, \forall n\in \N\right\}.$$ 
The sets $F_m$ are closed subsets of $Z$. Indeed, 
$$F_m=\left(\bigcap_{n\in \N} p_n^{-1}(]-\infty, m \lambda_n])\right)\cap \left(\overline{co}(A)\right)$$
where, for each $n\in \N$, $p_n^{-1}(]-\infty, m \lambda_n])$ is a closed subset of $Z$ by the continuity of $p_n$. On the other hand, we have $\overline{co}(A)=\bigcup_{m\in N} F_m$. Indeed, let $z\in \overline{co}(A)$, there exists $C_z\in \R$ such that $p_n(z)\leq C_z \lambda_n$ for all $n\in \N$. If $C_z\leq 0$, then $z\in F_0$. If $C_z> 0$, it suffices to take $m_1:= [C_z]+1$ where $[C_z]$ denotes the floor of $C_z$ to have that $z\in F_{m_1}$. We deduce then that $F_m-a$ are closed and that $\overline{co}(A)-a=\bigcup_{m\in N} \left(F_m-a\right)$. Using the Baire Theorem on the complete metric space $(\overline{co}(A)-a,\|.\|_{F})$, we get an $m_0\in \N$ such that $F_{m_0}-a$ has no empty interior in $(\overline{co}(A)-a,\|.\|_{F})$.
Since by hypothesis $\overline{co}(A)-a$ has no empty interior in $F$, using Lemma \ref{Connue1} to obtain that $F_{m_0}-a$ has no empty interior in $F$. So there exists $z_0\in F_{m_0}-a$ and some integer number $N\in \N^*$ such that $B_{F}(z_0,\frac{1}{N})\subset F_{m_0}-a$. In other words, for all $z\in B_F(b, \frac{1}{N})\subset F_{m_0}$ (with $b:=a+z_0\in F_{m_0}\subset F$) and all $n\in \N$, we have:
\begin{eqnarray} \label{Q3}
p_n(z)\leq m_0 \lambda_n.
\end{eqnarray}
Let now $B$ a bounded subset of $F$, there exists an integer number $K_B\in \N^*$ such that $B\subset B_F(0,K_B)$. On the other hand, for all $h\in B$, there exists $z_h\in B_F(b,\frac{1}{N})$ such that $h=K_B.N(z_h-b)$. So using (\ref{Q3}) and the subadditivity of $p_n$, we obtain that, for all $n\in \N$:
\begin{eqnarray} 
p_n(h)= p_n(K_BN(z_h-b))&\leq& K_BNp_n(z_h-b)\nonumber\\
                                    &\leq& K_BN(p_n(z_h)+p_n(-b))\nonumber\\
                                    &\leq& K_BN m_0\lambda_n+ K_BNp_n(-b).\nonumber
\end{eqnarray}
On passing to the supremum on $B$, we obtain for all $n\in \N$,
$$\sup_{h\in B} p_n(h)\leq K_BN m_0\lambda_n+ K_BN p_n(-b).$$
Since $\lambda_n\longrightarrow 0$, we have
\begin{eqnarray*} 
\limsup_n \left(\sup_{h\in B} p_n(h) \right) &\leq& K_BN\limsup_n p_n(-b)\leq0.
\end{eqnarray*} 
This concludes the proof.
\end{proof}

\vskip5mm
As an immediat consequence, we obtain the following corollary. 

\begin{Cor} \label{Baire} Let $Z$ be a Banach space. Let $(f_n)_n\subset Z^*$ be a sequence of linear and continuous functionnals on $Z$ and let $(\lambda_n)_n\subset \R^+$ such that $\lambda_n\rightarrow 0$. Let $A$ be a no empty subset of $Z$, $a\in \overline{co}(A)$ and $F:=\overline{span(A-a)}$ the closed vector space generated by $A$. Suppose that $\overline{co}(A-a)$ ($=\overline{co}(A)-a$) has no empty interior in $F$ and that 
\begin{itemize}   
\item[$(1)$] for all $z\in \overline{co}(A)$, there exists a real number $C_z$ such that, for all $n\in \N$, we have $$f_n(z)\leq C_z \lambda_n.$$
\item[$(2)$] $f_n\stackrel{w^*}{\rightarrow}0$. 
\end{itemize}

Then, $\|(f_n)_{|F}\|_{F^*}\rightarrow 0$.
\end{Cor}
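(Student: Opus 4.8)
The plan is to recognize, as the authors suggest, that this corollary is just the linear specialization of the principal Lemma \ref{Baire1}, and to feed the data $(f_n)_n$ into that lemma with $p_n := f_n$. First I would check that the hypotheses of Lemma \ref{Baire1} are met. Each $f_n$ is linear and continuous, hence in particular additive, so it is trivially subadditive: $f_n(x+y)=f_n(x)+f_n(y)\leq f_n(x)+f_n(y)$. The data $A$, $a\in\overline{co}(A)$, $F=\overline{span(A-a)}$, and the nonemptiness of the interior of $\overline{co}(A-a)$ in $F$ are literally the same as in the lemma. Condition $(1)$ of the corollary is verbatim condition $(1)$ of the lemma with $p_n=f_n$.

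The one point requiring a short argument is condition $(2)$ of Lemma \ref{Baire1}, namely $\limsup_n p_n(z)\leq 0$ for every $z\in F$. Here I would simply unwind the meaning of $f_n\stackrel{w^*}{\rightarrow}0$: weak-$*$ convergence to the origin means $f_n(z)\to 0$ for every fixed $z\in Z$, and a fortiori for every $z\in F$. Hence $\limsup_n p_n(z)=\lim_n f_n(z)=0\leq 0$, so $(2)$ holds. At this stage all hypotheses of Lemma \ref{Baire1} are verified, and the lemma yields that for every bounded subset $B$ of $F$, $\limsup_n\big(\sup_{h\in B}f_n(h)\big)\leq 0$.

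To conclude I would specialize $B$ to the closed unit ball $B_F(0,1)$ of $F$. The only thing to observe is that this ball is symmetric ($-B_F(0,1)=B_F(0,1)$), so by linearity of $f_n$, for any $h$ with $f_n(h)<0$ one has $f_n(-h)=-f_n(h)>0$ with $-h$ still in the ball; therefore $\sup_{h\in B_F(0,1)}f_n(h)=\sup_{h\in B_F(0,1)}|f_n(h)|=\|(f_n)_{|F}\|_{F^*}$. Combining this identification with the output of the lemma gives $\limsup_n\|(f_n)_{|F}\|_{F^*}\leq 0$. Since these norms are nonnegative, the limit inferior is also nonnegative, so the $\limsup$ equals $0$ and the sequence converges: $\|(f_n)_{|F}\|_{F^*}\to 0$.

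I do not expect any genuine obstacle here; the entire content is transported from Lemma \ref{Baire1}. The only step that is not purely formal is the translation of weak-$*$ convergence into the pointwise bound $\limsup_n f_n(z)\leq 0$, and the elementary remark that the supremum of a linear functional over a balanced bounded set coincides with its dual norm on $F$; both are routine.
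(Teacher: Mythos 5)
Your proposal is correct and is exactly the argument the paper intends: it invokes Lemma \ref{Baire1} with $p_n:=f_n$ (subadditive since linear) and a bounded set $B$ equal to the unit ball/sphere of $F$, identifying the supremum there with $\|(f_n)_{|F}\|_{F^*}$. You simply spell out the routine verifications that the paper's one-line proof leaves implicit.
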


\begin{proof} The proof follows Lemma \ref{Baire1} with the subadditive and continuous maps $f_n$ and the bounded set $B:=S_{F^*}$.
\end{proof}
\vskip5mm
In the following corollary, the inequality in $ii)$ depends on $z\in Z$ unlike in \cite{LY} where the inequality is uniformly independent on $z$. Note also that if $C_z$ does not depend on $z$, the condition $ii)$ is also true by replacing: for all $z\in \overline{co}(Q)$ by for all $z\in Q$.
 
\begin{Cor} Let $Q\subset Z$ be a subset of finite codimension in $Z$. Let $(f_k)_k\subset Z^*$ and $\epsilon_k\geq 0$ and $\epsilon_k\rightarrow 0$ such that 
\begin{itemize}
\item[$i)$] $\|f_k\|\geq\delta> 0$, for all $k\in \N$, and $f_k\stackrel{w^*}{\rightarrow} f$.
\item[$ii)$] for all $z\in \overline{co}(Q)$, there exists $C_z\in \R$ such that for all $k\in \N$, $f_k(z)\geq -\epsilon_k C_z$.
\end{itemize}
Then, $f\neq 0$.
\end{Cor}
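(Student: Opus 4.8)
The plan is to argue by contradiction: suppose $f=0$ and derive a contradiction with the lower bound $\|f_k\|\geq\delta>0$. The whole point will be to recognize that, after a sign change, the hypotheses are exactly those of Corollary~\ref{Baire}. First I would unpack Definition~\ref{D}: since $Q$ is of finite codimension, there is a point $z_0\in\overline{co}(Q)$ such that $F:=\overline{span(Q-z_0)}$ has finite codimension in $Z$ and $\overline{co}(Q-z_0)=\overline{co}(Q)-z_0$ has non empty interior in $F$. I would then set $A:=Q$, $a:=z_0$, $\lambda_k:=\epsilon_k$ and $g_k:=-f_k$. Hypothesis $ii)$ reads $g_k(z)=-f_k(z)\leq\epsilon_k C_z=C_z\lambda_k$ for every $z\in\overline{co}(Q)$, which is precisely condition $(1)$ of Corollary~\ref{Baire}; and the contradiction assumption $f=0$ gives $g_k\stackrel{w^*}{\rightarrow}0$, which is condition $(2)$. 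Corollary~\ref{Baire} then yields $\|(f_k)_{|F}\|_{F^*}=\|(g_k)_{|F}\|_{F^*}\rightarrow 0$.

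It remains to promote this norm decay on $F$ to norm decay on all of $Z$. Here I would invoke the classical fact that a closed subspace of finite codimension is complemented: write $Z=F\oplus G$ with $G$ finite dimensional, and let $P_F,P_G$ be the associated continuous projections, say with $\|P_F\|,\|P_G\|\leq M$. On the finite dimensional space $G$, the weak-$*$ convergence $f_k\rightarrow 0$ forces $f_k(u)\rightarrow 0$ for each $u\in G$, hence (choosing a basis of $G$ and using equivalence of norms in finite dimension) $\sup\{|f_k(u)|:u\in G,\ \|u\|\leq 1\}\rightarrow 0$. Then for every $z\in Z$ with $\|z\|\leq 1$, splitting $z=P_F z+P_G z$ and using the triangle inequality for the modulus gives
$$|f_k(z)|\leq M\,\|(f_k)_{|F}\|_{F^*}+M\,\sup_{u\in G,\ \|u\|\leq 1}|f_k(u)|\longrightarrow 0,$$
so that $\|f_k\|\rightarrow 0$. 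This contradicts $\|f_k\|\geq\delta>0$, and therefore $f\neq 0$.

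The routine ingredients (translation invariance of the closed convex hull, closedness of $F$, the elementary norm estimates) are immediate. The step carrying the real content is the reduction to Corollary~\ref{Baire}, and the one place to be careful is the complementation of $F$: this is exactly where the finite-codimension hypothesis is genuinely used, since it is what allows the merely pointwise (weak-$*$) decay on the finite dimensional complement $G$ to be upgraded to uniform decay and then combined with the norm decay already obtained on $F$.
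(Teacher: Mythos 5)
Your proposal is correct and follows essentially the same route as the paper: contradiction via $f=0$, reduction to Corollary~\ref{Baire} applied to $-f_k$, and then the splitting $Z=F\oplus E$ with $E$ finite dimensional, where weak-$*$ convergence upgrades to norm convergence of the restrictions. The only (immaterial) difference is that you conclude $\|f_k\|\rightarrow 0$ directly, whereas the paper derives $\liminf_k\|(f_k)_{|E}\|_{E^*}\geq \delta/L>0$ and contradicts that instead.
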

\begin{proof} Suppose by contradiction that $f=0$. By applying Corollary \ref{Baire} to $Q$ and $-f_k$, we obtain $\|(f_k)_{|F}\|_{F^*}\rightarrow 0$ where $F:=\overline{span(Q-z_0)}$. Since $F$ is of finite codimension in $Z$, there exists a finite-dimensional subspace $E$ of $Z$, such that $Z= F\oplus E$. Thus, there exists $L> 0$ such that $$\|f_k\|_{Z}\leq L\left(\|(f_k)_{|E}\|_{E^*}+\|(f_k)_{|F}\|_{F^*}\right).$$
Then, using $i)$ we obtain $\lim_k\|(f_k)_{|E}\|_{E^*}\geq \frac{\delta}{L}$. Since the weak-star topology and the norm topology coincids on $E$ because of finite dimension, we have that $0=\|(f)_{|E}\|_{E^*}=\lim_k \|(f_k)_{|E}\|_{E^*}\geq \frac{\delta}{L}>0$, which is a contradiction. Hence $f\neq 0$.
\end{proof}

\bibliographystyle{amsplain}

\end{document}